\numberwithin{equation}{section}
\newtheorem{theorem}{Theorem}[section]
\newtheorem{definition}[theorem]{Definition}
\newtheorem{lemma}[theorem]{Lemma}
\newtheorem{corollary}[theorem]{Corollary}
\theoremstyle{remark}
\newtheorem{remark}[theorem]{Remark}
\theoremstyle{plain}
\newcommand{\N}{\mathbb{N}}
\newcommand{\Rn}{{\mathbb{R}^n}}
\newcommand{\Lh}{L^{p,\lambda}(\Rn)}
\newcommand{\Vz}{V_0L^{p,\lambda}(\Rn)}
\newcommand{\Vi}{V_\infty L^{p,\lambda}(\Rn)}
\newcommand{\Vast}{V^{(\ast)} L^{p,\lambda}(\Rn)}
\newcommand{\Vziast}{V_{0,\infty}^{(\ast)}L^{p,\lambda}(\Rn)}
\begin{document}


\title[Invariance of vanishing Morrey subspaces]{On the invariance of certain vanishing subspaces of Morrey spaces with respect to some classical operators}

\author[A. \c{C}. Alabalik]{Aysegul \c{C}. Alabalik}
\address{Department of Mathematics, Institute of Natural and Applied Sciences, Dicle University, 21280 Diyarbakir, Turkey}
\email{aysegulalabalik@gmail.com}

\author[A. Almeida]{Alexandre Almeida$^{*}$}
\address{Department of Mathematics, Center for Research and Development in Mathematics and Applications (CIDMA), University of Aveiro, 3810-193 Aveiro, Portugal}
\email{jaralmeida@ua.pt}

\author[S. Samko]{Stefan Samko}
\address{University of Algarve, Department of Mathematics, Campus de Gambelas, 8005-139 Faro, Portugal}
\email{ssamko@ualg.pt}

\thanks{$^*$ Corresponding author.}
\thanks{A. Almeida was partially supported by the Portuguese national funding agency for science, research and technology (FCT), within the Center for Research and Development in Mathematics and Applications (CIDMA), project UID/MAT/04106/2019. S. Samko was supported by Russian Foundation for Basic Research under the grants 19-01-00223 and 18-01-00094-a.}

\date{March 20, 2019}

\subjclass[2010]{46E30, 42B35, 42B20, 42B25, 47B38}

\keywords{Morrey spaces, vanishing properties, maximal functions, potential operators, singular operators, Hardy operators}

\begin{abstract}
	We consider subspaces of Morrey spaces defined in terms of various vanishing properties of functions. Such subspaces were recently used to describe the closure of $C_0^\infty(\Rn)$ in Morrey norm. We show that these subspaces are invariant with respect to some classical operators of harmonic analysis, such as the Hardy-Littlewood maximal operator, singular type operators and Hardy operators. We also show that the vanishing properties defining those subspaces are preserved under the action of Riesz potential operators and fractional maximal operators.
\end{abstract} \maketitle

\section{\textbf{Introduction}}
Morrey spaces play an important role in the study of local behaviour and regularity properties of solutions to PDE, including heat equations and Navier-Stokes equations. We refer to \cite{Kato92,L-R16,Soft11,Tri13,Tri15} and references therein for further details.
For $1\leq p<\infty$, $0\leq \lambda \leq n$, the classical Morrey space $\Lh$ consists of all locally $p$-integrable functions $f$ on $\Rn$ with finite norm
\begin{equation}\label{homMorreynorm:general}
	\|f\|_{p,\lambda}:= \sup_{x\in\Rn,\, r>0} \mathfrak{M}_{p,\lambda}(f;x,r)^{1/p},
\end{equation}
where
\begin{equation}\label{def:modular}
	\mathfrak{M}_{p,\lambda}(f;x,r):= \frac{1}{r^\lambda} \int_{B(x,r)} |f(y)|^p\,dy\,, \ \ \ \ \ \ x\in\Rn, \ \ \ r>0.
\end{equation}
Straightforward calculations show that
$$
\|f(t\cdot)\|_{\Lh} = t^{\frac{\lambda-n}{p}}\,\|f\|_{\Lh}\,, \ \ \ \ t>0,
$$
which implies a modification of the scaling factor in comparison with $L^p$-spaces.

It is well known that the spaces $\Lh$ are non-separable if $\lambda>0$ (see \cite[Proposition~3.7]{RosTri15} for a proof). The lack of approximation tools for the entire Morrey space has motivated the introduction of appropriate subspaces, like the Zorko space (\cite{Zor86}) collecting all Morrey functions for which the translation is continuous in Morrey norm and vanishing spaces defined in terms of the vanishing properties $(V_0$), $(V_\infty$) and $(V^\ast$) defined below.

The theory of Morrey spaces goes back to Morrey \cite{Mor38} who considered related integral inequalities in the study of solutions to nonlinear elliptic equations. In the form of Banach spaces of functions, called thereafter Morrey spaces, the ideas of Morrey \cite{Mor38} were further developed by Campanato \cite{Cam64} and Peetre \cite{Pee69}. We refer to the books \cite{Adam15,Gia83,Pick13,Tay00,Tri13} and  the overview \cite{RafNSamSam13} for additional references and basic properties of these spaces and some of their generalizations. A discussion on Harmonic Analysis in Morrey spaces can be found in \cite{AdamXiao12}, \cite{RosTri14}, \cite{Tri15}.


Many classical operators from Harmonic Analysis such as maximal operators, singular operators, potential operators and Hardy operators, are known to be bounded in Morrey spaces. There are many papers in the literature dealing with this subject, including the case when the spaces and/or the operators have generalized parameters. We refer to the papers \cite{Adam75,AkbGulMus12,BurGogGulMus10,BurJaiTar11,ChiaFras87,Gul09,Kato92,KurNisSug00,LukMedPerNSam12,Miz91,Nakai94,PerNSam11,RosTri14,SawSugTan09,Shi06,Soft06,SugTan03}.


In this paper we are interested in studying the behavior of those classical operators in certain subspaces of Morrey spaces. We consider the following subspaces of $\Lh$. The class $\Vz$ consists of all those functions $f\in\Lh$ such that
\begin{equation}
	\lim_{r\to 0}\, \sup_{x\in\Rn}\, \mathfrak{M}_{p,\lambda}(f;x,r) =0. \tag{$V_0$}
\end{equation}
Similarly, $\Vi$ is the set of all $f\in\Lh$ such that
\begin{equation}
	\lim_{r\to \infty}\, \sup_{x\in\Rn}\, \mathfrak{M}_{p,\lambda}(f;x,r) =0. \tag{$V_\infty$}
\end{equation}
We also consider the set $\Vast$ consisting of all functions $f\in \Lh$ having the vanishing property
\begin{equation*}\label{vast-property}
	\lim_{N\to \infty} \mathcal{A}_{N,p}(f):= \lim_{N\to \infty} \sup_{x\in\Rn}\, \int_{B(x,1)} |f(y)|^p \, \chi_N(y)\,dy = 0,\tag{$V^\ast$}
\end{equation*}
where $\chi_N:= \chi_{\Rn \setminus B(0,N)}\,, \quad N\in\N.$

The three vanishing classes defined above are closed sets in $\Lh$ with respect to the norm \eqref{homMorreynorm:general}. The space $\Vz$, often called in the literature just by \emph{vanishing Morrey space}, was already introduced in \cite{ChiaFran92,Vit90,Vit93} in connection with applications to PDE. The subspaces $\Vi$ and $\Vast$ were recently introduced in \cite{AlmSam17,AlmSam18} to study the approximation problem by nice functions in Morrey spaces. Note that $\Vi$ was independently considered in \cite{YSY15} in the study of interpolation problems.

The subspace $\Vziast$, collecting those Morrey functions having all the vanishing properties $(V_0$), $(V_\infty$) and $(V^\ast$), provides an explicit description of the closure of $C_0^\infty(\Rn)$ in Morrey norm, see \cite[Theorem~5.3 and Corollary~5.4]{AlmSam17}.

The boundedness of classical operators in vanishing Morrey spaces at the origin was already studied in some papers, including the case of generalized parameters, see \cite{PerRagNSamWall12,Rag08,NSam13,NSam13p}. In particular, in \cite{NSam13} it was studied a class of sublinear singular type operators which includes the Hardy-Littlewood maximal function and Calder\'{o}n-Zygmund operators with standard kernels. The boundedness results in \cite{NSam13} were given in terms of Zygmund-type integral conditions on the function parameter defining the Morrey space.


Up to authors' knowledge, the boundedness of classical operators in the vanishing Morrey spaces $\Vi$ and $\Vast$ was not touched so far, apart some results in \cite[Theorem~3.8]{AlmSam17}, \cite[Corollary~4.3]{AlmSam18} where it was observed that convolution operators with integrable kernels are bounded in those subspaces. It is the main goal of this paper to show that the vanishing properties defining these subspaces are preserved under the action of many other operators from Harmonic Analysis, including maximal, singular, potential and Hardy operators. One of the key results is the invariance of the space $\Vast$ with respect to the Hardy-Littlewood maximal operator (cf. Theorem~\ref{the:VastMax}).

The paper is organized as follows. After some preliminaries on the operators under consideration, we give the main results in Section~\ref{sec:main}. The boundedness results on the spaces $\Vi$ and $\Vast$ are given in Sections~\ref{sec:Vi} and \ref{sec:Vast}, respectively. Section~\ref{sec:Vziast} is devoted to the study of the invariance of the smaller subspace $\Vziast$. Finally, we discuss additional results in Section~\ref{sec:hybrids} for some operators that can be seen as hybrids of potential and Hardy operators.

\section{Preliminaries}\label{sec:prelim}

We use the following notation: $B(x,r)$ is the open ball in
$\mathbb{R}^{n}$ centered at $x\in \mathbb{R}^{n}$ and radius $r>0$. The (Lebesgue) measure of a measurable set $E\subseteq {\mathbb{R}^{n}}$ is denoted by $|E|$ and $\chi_{E}$ denotes its characteristic function. The measure of the unit ball in $\Rn$ is simply denoted by $v_n$.
We use $c$ as a generic positive constant, i.e., a constant whose value may change with each appearance. The expression $A
\lesssim A$ means that $A\leq c\,B$ for some independent constant $c>0$, and $A\approx B$ means $A \lesssim B \lesssim A$.


As usual $C_0^\infty(\Rn)$ stands for the class of all complex-valued infinitely differentiable
functions on $\Rn$ with compact support, and $L^p(\Rn)$ denotes the classical Lebesgue space equipped with the usual norm.

\subsection{Some classical operators}

The following class of operators was introduced in \cite{NSam13}.

\begin{definition}
	Let $1<p<\infty$. A sublinear operator $T$ is called $p$-admissible singular type operator if it is bounded in $L^p(\Rn)$ and it
	satisfies a ``size condition'' of the form
	\begin{equation*}
		\chi_{B(x,r)}(z)\, \left| T\left(f\,\chi_{\Rn\setminus B(x,2r)}\right)(z)\right| \lesssim \chi_{B(x,r)}(z)\,\int_{\Rn\setminus B(x,2r)} \frac{|f(y)|}{|y-z|^n}dy
	\end{equation*}
	for every $x\in\Rn$ and $r>0$.
\end{definition}

An example of $p$-admissible singular type operators is the \emph{Hardy-Littlewood maximal operator}
$$
Mf(x):= \sup_{t>0} \frac{1}{|B(x,t)|} \int_{B(x,t)} |f(y)|dy, \quad x\in\Rn.
$$

It is well know that the maximal operator controls various other important operators of harmonic analysis. This is the case of the sharp maximal function
\begin{equation}\label{ineq:sharp-max}
	M^\sharp f(x):= \sup_{t>0} \frac{1}{|B(x,t)|} \int_{B(x,t)} |f(y)-f_{B(x,r)}|\,dy,
\end{equation}
with $f_B=\frac{1}{|B|} \int_B f(z)dz$. By straightforward calculations, we have
\begin{equation}\label{ineq:sharp-max}
	(M^\sharp f)(x) \leq 2 (Mf)(x), \quad x\in\Rn.
\end{equation}

The class above includes also singular integral operators $S$, defined by
\begin{equation}\label{def:singular}
	Sf(x):= \int_{\Rn} K(x,y)f(y)\,dy := \lim_{\varepsilon\to 0} \int_{|x-y|>\varepsilon} K(x,y)f(y)\,dy,
\end{equation}
which are bounded in $L^p(\Rn)$ and whose kernel satisfies
\begin{equation}\label{size-cond}
	\left|K(x,y)\right| \lesssim |x-y|^{-n}, \ \ \ \ \ \text{for all} \; x\neq y.
\end{equation}
This is the case of Calder\'{o}n-Zygmund operators with ``standard kernels'' (cf. \cite[p.~99]{Duo01}).

Other examples of $p$-admissible singular type operators are the multidimensional \emph{Hardy operators} $H$ and $\mathcal{H}$, defined by
$$
H f(x):= \frac{1}{|x|^{n}} \int_{|y|<|x|} f(y)\,dy \ \ \ \ \text{and} \ \ \ \ \mathcal{H} f(x):= \int_{|y|>|x|} \frac{f(y)}{|y|^{n}}\,dy.
$$
Using that $|x-y|<2|x|$ in the integral defining $H$, we get the pointwise estimate
\begin{equation}\label{ineq:HMaximal}
	H\big(|f|\big)(x) \leq 2^n v_n \, Mf(x), \quad x\in\Rn.
\end{equation}

We shall consider more general Hardy type operators, $H^\alpha$ and $\mathcal{H}^\alpha$, $0\leq \alpha <n$, defined for appropriate functions $f$ by

$$H^\alpha f(x):= |x|^{\alpha-n} \int_{|y|<|x|} f(y)\,dy \ \ \ \ \text{and} \ \ \ \ \mathcal{H}^\alpha f(x):= |x|^{\alpha} \int_{|y|>|x|} \frac{f(y)}{|y|^{n}}\,dy.
$$

It can be easily shown that the operator $H^\alpha$ is now dominated by the \emph{fractional maximal operator}
$$
M^\alpha f(x):= \sup_{t>0} \frac{1}{|B(x,t)|^{1-\frac{\alpha}{n}}} \int_{B(x,t)} |f(y)|dy, \quad x\in\Rn,
$$
which in turn can be estimated by the \emph{Riesz potential operator}
$$
I^\alpha f(x):=  \int_{\Rn} \frac{f(y)}{|x-y|^{n-\alpha}}\,dy, \quad x\in\Rn.
$$
More precisely, for $0<\alpha<n$ there holds
\begin{equation}\label{ineq:HMIalpha}
	\big|H^\alpha f(x)\big| \leq v_n\, 2^{n-\alpha}\, \big(M^\alpha f\big)(x)\leq 2^{n-\alpha} \,I^\alpha\big(|f|\big)(x), \quad\quad x\in\Rn.
\end{equation}

The Hardy operator $\mathcal{H}^\alpha$ is also dominated by the Riesz potential operator. The pointwise estimate below should be known, but since we did not find a reference in the literature we take the opportunity to give a simple proof of it.

\begin{lemma}
	If $0<\alpha<n$, then we have
	\begin{equation}\label{ineq:mathcalHIalpha}
		\big| \mathcal{H}^\alpha f(x)\big| \leq 2^{n-\alpha}\, I^{\alpha}\big(|f|\big)(x)\,, \ \ \ x\in\Rn.
	\end{equation}
\end{lemma}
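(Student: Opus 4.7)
The plan is a direct pointwise estimate of the integrand in $\mathcal{H}^\alpha f(x)$ by the integrand of $I^\alpha(|f|)(x)$ on the region $\{|y|>|x|\}$, followed by extension to the whole of $\Rn$.

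First, I would bring the absolute value inside and use that $|x|^\alpha\leq |y|^\alpha$ on the integration region $\{|y|>|x|\}$, so that
$$
|\mathcal{H}^\alpha f(x)| \leq |x|^\alpha \int_{|y|>|x|}\frac{|f(y)|}{|y|^n}\,dy \leq \int_{|y|>|x|}\frac{|f(y)|}{|y|^{n-\alpha}}\,dy.
$$
This absorbs the factor $|x|^\alpha$ and turns the problem into comparing $|y|^{n-\alpha}$ with $|x-y|^{n-\alpha}$.

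Second, the key (and essentially only) geometric step: for $|y|>|x|$ the triangle inequality gives $|x-y|\leq |x|+|y|\leq 2|y|$, hence $|y|^{n-\alpha}\geq 2^{-(n-\alpha)}|x-y|^{n-\alpha}$ (this is where the exponent condition $0<\alpha<n$ is used, to keep the exponent $n-\alpha$ positive so the inequality flips correctly). Substituting back yields
$$
\int_{|y|>|x|}\frac{|f(y)|}{|y|^{n-\alpha}}\,dy \leq 2^{n-\alpha}\int_{|y|>|x|}\frac{|f(y)|}{|x-y|^{n-\alpha}}\,dy.
$$

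Finally, enlarging the domain of integration from $\{|y|>|x|\}$ to all of $\Rn$ (the integrand is nonnegative) gives exactly $2^{n-\alpha}I^\alpha(|f|)(x)$, as claimed. There is no real obstacle here; the whole argument is the triangle-inequality bound $|x-y|\leq 2|y|$ on the relevant region, combined with the trivial comparison $|x|^\alpha\leq |y|^\alpha$ that kills the outer factor $|x|^\alpha$ in $\mathcal{H}^\alpha$.
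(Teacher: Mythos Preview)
Your proof is correct and is essentially the same as the paper's: both arguments reduce to the pointwise kernel inequality $\dfrac{|x|^\alpha}{|y|^{n}} \leq \dfrac{2^{n-\alpha}}{|x-y|^{n-\alpha}}$ for $|y|>|x|$, obtained from $|x|^\alpha\leq |y|^\alpha$ (i.e.\ $t=|x|/|y|<1$) together with $|x-y|\leq 2|y|$ (i.e.\ $|tx'-y'|\leq 2$). The paper just packages these two facts via the substitution $t=|x|/|y|$ and unit vectors $x',y'$, whereas you state them directly; the content is identical.
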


\begin{proof}
	The inequality follows from the estimate
	$$ \frac{|x|^\alpha}{|y|^{n}} \leq \frac{2^{n-\alpha}}{|x-y|^{n-\alpha}}, \quad\quad \text{for}\quad |y|>|x|.$$
	Putting $t=\frac{|x|}{|y|}$ and $x^\prime=\frac{x}{|x|}$, $y^\prime=\frac{y}{|y|}$, the latter is equivalent to
	$$ t^\alpha \leq 2^{n-\alpha}\Big(\frac{|y|}{|x-y|}\Big)^{n-\alpha} \quad\quad \text{or} \quad\quad t^\alpha \leq \frac{2^{n-\alpha}}{|tx^\prime-y^\prime|^{n-\alpha}}$$
	which is a consequence of having $t<1$ and $|tx^\prime-y^\prime|\leq 2$.
\end{proof}

%

\subsection{Pointwise estimates for modulars}

The estimates given in the next two lemmas are taken from \cite[Theorems~4.1 and 4.3]{NSam13}.

\begin{lemma}\label{lem:modularA}
	Let $1<p<\infty$ and $0\leq\lambda<n$. If $T$ is a $p$-admissible sublinear singular type operator, then
	\begin{equation*}
		\mathfrak{M}_{p,\lambda}(T f;x,r) \lesssim r^{n-\lambda}\, \Big( \int_r^\infty t^{\frac{\lambda-n}{p}-1} \big(\mathfrak{M}_{p,\lambda}(f;x,t)\big)^{\frac{1}{p}} \,dt \Big)^{p}
	\end{equation*}
	with the implicit constant independent of $x\in\Rn$, $r>0$ and $f\in L^p_{loc}(\Rn)$.
\end{lemma}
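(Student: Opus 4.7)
The plan is to split the function dyadically around the ball under consideration: write $f = f_1 + f_2$ with $f_1 := f\,\chi_{B(x,2r)}$ and $f_2 := f\,\chi_{\Rn\setminus B(x,2r)}$, and use the sublinearity of $T$ to split the modular into contributions from $Tf_1$ and $Tf_2$. The local piece is then handled by the assumed $L^p$-boundedness of $T$, and the non-local piece by the size condition in the definition of $p$-admissible singular type operator.

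For $Tf_1$, the $L^p$-bound yields
\begin{equation*}
\int_{B(x,r)}|Tf_1(z)|^p\,dz \leq \|Tf_1\|_{L^p}^p \lesssim \int_{B(x,2r)}|f(y)|^p\,dy = (2r)^\lambda\,\mathfrak{M}_{p,\lambda}(f;x,2r),
\end{equation*}
so after dividing by $r^\lambda$ the local contribution is controlled by $\mathfrak{M}_{p,\lambda}(f;x,2r)$. To convert this into the integral appearing on the right-hand side, I would use a reverse-monotonicity observation: since the map $s\mapsto s^\lambda\mathfrak{M}_{p,\lambda}(f;x,s)$ is non-decreasing, for $t\in[2r,4r]$ one has $\mathfrak{M}_{p,\lambda}(f;x,2r)^{1/p} \leq 2^{\lambda/p}\,\mathfrak{M}_{p,\lambda}(f;x,t)^{1/p}$. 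Integrating $t^{(\lambda-n)/p-1}$ over $[2r,4r]$ produces exactly the factor $r^{(\lambda-n)/p}$, which gives $\mathfrak{M}_{p,\lambda}(f;x,2r) \lesssim r^{n-\lambda}\bigl(\int_r^\infty t^{(\lambda-n)/p-1}\mathfrak{M}_{p,\lambda}(f;x,t)^{1/p}\,dt\bigr)^p$, as required.

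For $Tf_2$, the size condition together with the geometric inequality $|y-z|\geq |y-x|/2$ for $z\in B(x,r)$ and $y\notin B(x,2r)$ yields the pointwise bound $|Tf_2(z)|\lesssim \int_{\Rn\setminus B(x,2r)}|f(y)|\,|y-x|^{-n}\,dy$ on $B(x,r)$. I would then apply the layer-cake identity $|y-x|^{-n} = n\int_{|y-x|}^{\infty}t^{-n-1}\,dt$ and Fubini to rewrite this as $\int_{2r}^{\infty} t^{-n-1}\int_{B(x,t)}|f(y)|\,dy\,dt$, and estimate the inner integral by Hölder:
\begin{equation*}
\int_{B(x,t)}|f(y)|\,dy \leq v_n^{1/p'}\,t^{n/p'}\,\bigl(t^\lambda\mathfrak{M}_{p,\lambda}(f;x,t)\bigr)^{1/p} = v_n^{1/p'}\,t^{n/p'+\lambda/p}\,\mathfrak{M}_{p,\lambda}(f;x,t)^{1/p}.
\end{equation*}
The exponents collapse to give exactly $t^{(\lambda-n)/p-1}$ inside the $t$-integral. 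Since the resulting pointwise bound on $|Tf_2(z)|$ is independent of $z$, integrating the $p$-th power over $B(x,r)$ contributes a factor $|B(x,r)|\approx r^n$, and division by $r^\lambda$ produces the desired $r^{n-\lambda}$ prefactor; enlarging the range of integration from $[2r,\infty)$ to $[r,\infty)$ combines this with the local piece.

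The main obstacle is the local estimate: one must resist the temptation to stop at $\mathfrak{M}_{p,\lambda}(f;x,2r)$ and instead re-express it as a $t$-integral of the same modular, which is where the reverse-monotonicity trick is essential. The non-local estimate follows the standard Fubini/Hölder mechanism and presents no serious difficulty; the hypothesis $\lambda<n$ is used there only to make the exponent $(\lambda-n)/p-1$ yield a convergent integral at infinity after pairing with the decay of $\mathfrak{M}_{p,\lambda}(f;x,t)^{1/p}$.
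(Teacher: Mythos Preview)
Your argument is correct. The near/far decomposition $f=f\chi_{B(x,2r)}+f\chi_{\Rn\setminus B(x,2r)}$, the use of $L^p$-boundedness on the local piece, the size condition plus the layer-cake/Fubini/H\"older mechanism on the far piece, and the reverse-monotonicity trick to absorb the local term into the $t$-integral are exactly the standard steps, and the exponents work out as you indicate.

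Note, however, that the paper does not give its own proof of this lemma: it simply quotes the estimate from \cite[Theorem~4.1]{NSam13}. The proof in that reference (in the more general setting of generalized Morrey spaces) follows precisely the strategy you outline, so your proposal matches the intended argument rather than differing from it.
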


Although the previous lemma is formulated for functions $f$ in $L^p_{loc}(\Rn)$, the finiteness of the right-hand side implies that the function $f$ must have already some prescribed behaviour at infinity which, together with the size condition, ensures the well-posedness of the operator $T$.

\begin{remark}
	The estimate given in Lemma~\ref{lem:modularA} implies that $p$-admissible sublinear singular type operators are bounded on $\Lh$ and also on the vanishing space $\Vz$, with $1<p<\infty$ and $0\leq \lambda<n$ (cf. \cite{NSam13}). In particular, all the operators $M$, $M^\sharp$, $S$, $H$ and $\mathcal{H}$ are bounded on both spaces $\Lh$ and $\Vz$.
\end{remark}


\begin{lemma}\label{lem:modularB}
	Let $0<\alpha<n$, $1<p<n/\alpha$, $1/q=1/p-\alpha/n$ and $0\leq\lambda,\mu<n$. Then
	\begin{equation*}
		\mathfrak{M}_{q,\mu}(I^\alpha f;x,r) \lesssim r^{n-\mu}\, \Big( \int_r^\infty t^{\frac{\lambda}{p}-\frac{n}{q}-1} \big(\mathfrak{M}_{p,\lambda}(f;x,t)\big)^{\frac{1}{p}} \,dt \Big)^{q}
	\end{equation*}
	with the implicit constant not depending on $x\in\Rn$, $r>0$ and $f\in L^p_{loc}(\Rn)$.
\end{lemma}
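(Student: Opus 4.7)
The plan is to decompose $f$ relative to the ball $B(x,2r)$ and handle $I^\alpha$ on each piece separately: the near part via the classical Hardy--Littlewood--Sobolev boundedness $I^\alpha\colon L^p(\Rn)\to L^q(\Rn)$, and the far part via a pointwise kernel estimate combined with H\"older's inequality. I would set $f_1=f\chi_{B(x,2r)}$ and $f_2=f-f_1$, use $|I^\alpha f|^q\lesssim |I^\alpha f_1|^q+|I^\alpha f_2|^q$ to split the modular, and bound each contribution by $r^{n-\mu}(\cdots)^q$.

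For the near part, HLS gives
$$\int_{B(x,r)}|I^\alpha f_1(y)|^q\,dy\leq \|I^\alpha f_1\|_{L^q(\Rn)}^q\lesssim \|f\|_{L^p(B(x,2r))}^q.$$
To convert this single-scale bound into the integral on the right-hand side of the statement, I would exploit the fact that $t\mapsto \|f\|_{L^p(B(x,t))}=\bigl(t^\lambda \mathfrak{M}_{p,\lambda}(f;x,t)\bigr)^{1/p}$ is nondecreasing, so that for every $t\geq 2r$ one has $\|f\|_{L^p(B(x,2r))}\leq t^{\lambda/p}\bigl(\mathfrak{M}_{p,\lambda}(f;x,t)\bigr)^{1/p}$. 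Multiplying by $t^{-n/q-1}$, integrating over $[2r,\infty)$, and using $\int_{2r}^\infty t^{-n/q-1}\,dt = \frac{q}{n}(2r)^{-n/q}$, one concludes
$$\|f\|_{L^p(B(x,2r))}\lesssim r^{n/q}\int_{r}^\infty t^{\lambda/p-n/q-1}\bigl(\mathfrak{M}_{p,\lambda}(f;x,t)\bigr)^{1/p}\,dt,$$
and raising to the power $q$ then dividing by $r^\mu$ delivers the required bound.

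For the far part, if $y\in B(x,r)$ and $z\notin B(x,2r)$ the triangle inequality gives $|y-z|\geq |z-x|/2$, so
$$|I^\alpha f_2(y)|\lesssim \int_{|z-x|\geq 2r}\frac{|f(z)|}{|z-x|^{n-\alpha}}\,dz,$$
a quantity independent of $y$. Writing $|z-x|^{\alpha-n}=(n-\alpha)\int_{|z-x|}^\infty t^{\alpha-n-1}\,dt$, applying Fubini, and using H\"older's inequality in the form $\int_{B(x,t)}|f|\leq c_n t^{n/p'}\|f\|_{L^p(B(x,t))}$, one obtains an upper bound of the shape $c\int_{2r}^\infty t^{\alpha-n-1+n/p'+\lambda/p}\bigl(\mathfrak{M}_{p,\lambda}(f;x,t)\bigr)^{1/p}\,dt$; the exponent collapses to $\lambda/p-n/q-1$ thanks to $1/p-1/q=\alpha/n$. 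Taking the $q$-th power, integrating over $B(x,r)$ contributes a factor $r^n$, and division by $r^\mu$ produces the required $r^{n-\mu}(\cdots)^q$.

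The main obstacle, to my mind, is the near-part step: HLS supplies only the single-scale quantity $\|f\|_{L^p(B(x,2r))}$, whereas the statement requires an integral from $r$ to infinity, so one must insert the monotonicity of $t\mapsto\|f\|_{L^p(B(x,t))}$ and rely on the integrability of $t^{-n/q-1}$ at infinity to manufacture precisely the right power of $r$. The far-part estimate is by comparison a routine application of Fubini and H\"older once the kernel has been tamed by $|y-z|\geq |z-x|/2$, and the matching of exponents in both pieces is what forces the particular weight $t^{\lambda/p-n/q-1}$ in the statement.
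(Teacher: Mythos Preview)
Your proof is correct and follows the standard route for such estimates. Note, however, that the paper does not actually prove this lemma: it is quoted verbatim from \cite[Theorem~4.3]{NSam13} (see the sentence preceding Lemma~\ref{lem:modularA} in the paper). So there is no ``paper's own proof'' to compare against here. That said, the decomposition $f=f\chi_{B(x,2r)}+f\chi_{\Rn\setminus B(x,2r)}$, the use of Hardy--Littlewood--Sobolev on the near part, and the Fubini/H\"older treatment of the far part constitute precisely the classical argument behind results of this type, and it is essentially what one finds in \cite{NSam13} (in a more general setting). Your handling of the near part --- converting the single-scale quantity $\|f\|_{L^p(B(x,2r))}$ into the integral over $[r,\infty)$ via monotonicity of $t\mapsto\|f\|_{L^p(B(x,t))}$ and the convergent weight $t^{-n/q-1}$ --- is the correct device and the exponent bookkeeping in both parts is accurate.
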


\begin{remark}
	The estimate given in Lemma~\ref{lem:modularB} implies a Sobolev-Spanne result on the $L^{p,\lambda} \to L^{q,\mu}$- boundedness of $I^\alpha$, under the additional assumptions $0\leq \lambda < n-\alpha p$ and $\lambda/p=\mu/q$. Moreover, the same estimate was used in \cite{NSam13} to show that the Riesz potential operator $I^\alpha$, and consequently the fractional maximal operator $M^\alpha$, are also bounded from the the vanishing space $\Vz$ into the vanishing space $V_0L^{q,\mu}(\Rn)$.
\end{remark}

\section{Main results}\label{sec:main}

In this section we show that both subspaces $\Vi$ and $\Vast$ are invariant with respect to the operators mentioned above. Since the the boundedness of such operators is already known in the whole Morrey space, we only have to show that the corresponding vanishing properties are preserved under the action of those operators.

\subsection{Preservation of the property $(V_\infty)$}\label{sec:Vi}


\begin{theorem}\label{the:Vinfty-singular}
	Let $1<p<\infty$ and $0\leq\lambda<n$. Then any $p$-admissible sublinear singular type operator $T$ is bounded in $\Vi$.
\end{theorem}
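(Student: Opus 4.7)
The plan is to start from the pointwise modular estimate in Lemma~\ref{lem:modularA}, which already encodes essentially all the analytic information about $T$ we shall need: for each $x\in\Rn$ and $r>0$,
$$\mathfrak{M}_{p,\lambda}(Tf;x,r) \lesssim r^{n-\lambda}\Big(\int_r^\infty t^{\frac{\lambda-n}{p}-1}\big(\mathfrak{M}_{p,\lambda}(f;x,t)\big)^{1/p}dt\Big)^{p}.$$
Since $T$ is already known to be bounded on $\Lh$ (see the remark following Lemma~\ref{lem:modularA}), it remains only to show that the vanishing property $(V_\infty)$ is preserved. Setting $\beta=(n-\lambda)/p>0$ and introducing the auxiliary function $\Phi(t):=\sup_{x\in\Rn}\big(\mathfrak{M}_{p,\lambda}(f;x,t)\big)^{1/p}$, the membership $f\in\Lh$ gives $\Phi(t)\leq\|f\|_{p,\lambda}<\infty$ for every $t>0$, while $f\in\Vi$ is precisely the statement that $\Phi(t)\to 0$ as $t\to\infty$.

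Next I would take the supremum over $x$ in the estimate above and move it inside the integral by the pointwise bound $\mathfrak{M}_{p,\lambda}(f;x,t)^{1/p}\leq\Phi(t)$, obtaining
$$\sup_{x\in\Rn}\mathfrak{M}_{p,\lambda}(Tf;x,r) \lesssim r^{p\beta}\Big(\int_r^\infty t^{-\beta-1}\Phi(t)\,dt\Big)^{p}.$$
Because the whole range of integration lies in $[r,\infty)$, one can pull out $\Phi^\ast(r):=\sup_{t\geq r}\Phi(t)$ and evaluate the remaining elementary integral:
$$\int_r^\infty t^{-\beta-1}\Phi(t)\,dt \leq \Phi^\ast(r)\int_r^\infty t^{-\beta-1}dt = \frac{\Phi^\ast(r)}{\beta}\,r^{-\beta}.$$
Plugging this back cancels the factor $r^{p\beta}$ exactly, yielding
$$\sup_{x\in\Rn}\mathfrak{M}_{p,\lambda}(Tf;x,r) \lesssim \frac{\big(\Phi^\ast(r)\big)^{p}}{\beta^{p}}.$$

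Finally, since $\Phi(t)\to 0$ as $t\to\infty$, the monotone tail $\Phi^\ast(r)$ also tends to zero as $r\to\infty$, which forces $\sup_{x\in\Rn}\mathfrak{M}_{p,\lambda}(Tf;x,r)\to 0$, i.e., $Tf\in\Vi$. Together with the $\Lh$-boundedness of $T$ this proves the theorem. I do not anticipate any serious obstacle here; the only point requiring a little care is the exchange of $\sup_x$ and integration, which is handled by the trivial majorization $\mathfrak{M}_{p,\lambda}(f;x,t)\leq\Phi(t)^{p}$, and the verification that $\beta>0$ so that the tail integral $\int_r^\infty t^{-\beta-1}dt$ converges and contributes the correct power of $r$ to absorb the prefactor $r^{n-\lambda}$.
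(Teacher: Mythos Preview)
Your proof is correct and follows essentially the same route as the paper's: both start from the modular estimate of Lemma~\ref{lem:modularA}, use that the integration runs only over $t\ge r$ so the uniform smallness of $\mathfrak{M}_{p,\lambda}(f;x,t)$ for large $t$ can be pulled out, and then compute $\int_r^\infty t^{\frac{\lambda-n}{p}-1}dt$ to cancel the prefactor $r^{n-\lambda}$. The paper phrases this via an $\varepsilon$--$R$ argument while you package the same step through the tail supremum $\Phi^\ast(r)$, but the arguments are otherwise identical.
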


\begin{proof}
	The proof is based on the modular estimate given in Lemma~\ref{lem:modularA}. Let $f\in\Vi$. For any $\varepsilon>0$ there exists $R=R(\varepsilon)>0$ such that $\mathfrak{M}_{p,\lambda}(f;x,t)<\varepsilon$ for every $t\ge R$ and all $x\in\Rn$. Thus for $r\ge R$ we have
	\begin{equation*}
		\mathfrak{M}_{p,\lambda}(T f;x,r) \lesssim \varepsilon\,r^{n-\lambda}  \Big( \int_r^\infty t^{\frac{\lambda-n}{p}-1} dt \Big)^{p}\lesssim \varepsilon
	\end{equation*}
	with the implicit constants independent of $x$ and $r$. This shows that
	$$
	\lim_{r\to\infty}\sup_{x\in\Rn}\mathfrak{M}_{p,\lambda}(T f;x,r) = 0
	$$
	and hence $Tf\in\Vi$.
\end{proof}

\begin{corollary}\label{cor:Vi-maximal}
	If $1<p<\infty$ and $0\leq\lambda<n$, then the operators $M$, $M^\sharp$, $H$, $\mathcal{H}$ and $S$ are bounded in $\Vi$.
\end{corollary}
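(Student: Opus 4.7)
The plan is to read this corollary as an immediate application of Theorem~\ref{the:Vinfty-singular}, once we have identified each of the listed operators as a $p$-admissible sublinear singular type operator or as a pointwise dominated version of such an operator. The preliminaries already provide all the necessary pointwise data: $M$ is explicitly the model example of the class; the singular integral operators $S$ (with kernels satisfying the size condition \eqref{size-cond}) belong to the class; and $H$, $\mathcal{H}$ were presented as further examples, either directly via the estimate $|x-y|<2|x|$ in the integral defining $H$ and a symmetric argument for $\mathcal{H}$.

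Concretely, I would first invoke Theorem~\ref{the:Vinfty-singular} directly to handle $M$, $S$, $H$ and $\mathcal{H}$, since each of them is $p$-admissible sublinear singular type with $1<p<\infty$, and hence the assumption $0\leq\lambda<n$ is exactly what the theorem requires. This step is routine: just assemble the examples from Section~\ref{sec:prelim}.

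For the sharp maximal operator $M^\sharp$, which is not obviously in the admissible class (its definition does not fit the size condition in the natural way), I would argue by domination. The pointwise inequality $M^\sharp f(x) \leq 2\,Mf(x)$ recorded in \eqref{ineq:sharp-max} implies, by monotonicity of the modular,
\begin{equation*}
\mathfrak{M}_{p,\lambda}(M^\sharp f;x,r) \leq 2^{p}\,\mathfrak{M}_{p,\lambda}(Mf;x,r)
\end{equation*}
uniformly in $x\in\Rn$ and $r>0$. Taking the supremum in $x$ and then the limit as $r\to\infty$, the conclusion for $M$ (already established in the previous step) transfers to $M^\sharp$. The $L^{p,\lambda}$-boundedness of $M^\sharp$ is obtained in the same way, so $M^\sharp f\in\Vi$ whenever $f\in\Vi$.

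I do not anticipate a genuine obstacle, as the statement is designed as a corollary: all the work is packaged in Lemma~\ref{lem:modularA} and Theorem~\ref{the:Vinfty-singular}. The only minor subtlety is the status of $M^\sharp$, which is handled cleanly by the pointwise domination by $M$; everything else is a direct application.
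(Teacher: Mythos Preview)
Your proposal is correct and matches the paper's approach: the corollary is stated there without proof, as an immediate consequence of Theorem~\ref{the:Vinfty-singular} together with the identification in Section~\ref{sec:prelim} of $M$, $S$, $H$, $\mathcal{H}$ as $p$-admissible singular type operators and the pointwise domination \eqref{ineq:sharp-max} for $M^\sharp$. Your separate treatment of $M^\sharp$ via domination by $M$ is exactly what the preliminaries set up.
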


%
%
%

In the sequel $T^\alpha$, $0<\alpha<n$, stands for any of the operators $I^\alpha$, $M^\alpha$, $H^\alpha$ and $\mathcal{H}^\alpha$ above.

\begin{theorem}\label{the:Vinfty-Spanne}
	Let $0<\alpha<n$, $1<p<n/\alpha$, $0\leq\lambda<n-\alpha p$, $1/q=1/p-\alpha/n$ and $\lambda/p=\mu/q$. Then $T^\alpha$ is bounded from $\Vi$ into $V_\infty L^{q,\mu}(\Rn)$.
\end{theorem}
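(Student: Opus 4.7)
The plan is to mimic the structure of the proof of Theorem~\ref{the:Vinfty-singular}, replacing Lemma~\ref{lem:modularA} by Lemma~\ref{lem:modularB}, and to reduce the four operators to the single case of $I^\alpha$ via the pointwise estimates already at our disposal. Indeed, from \eqref{ineq:HMIalpha} and \eqref{ineq:mathcalHIalpha} we have $|H^\alpha f(x)|, |\mathcal{H}^\alpha f(x)|, M^\alpha f(x) \lesssim I^\alpha(|f|)(x)$ on $\Rn$. Since the Morrey modular $\mathfrak{M}_{q,\mu}(\cdot;x,r)$ is monotone with respect to $|\cdot|$, it suffices to prove that $I^\alpha$ maps $\Vi$ into $V_\infty L^{q,\mu}(\Rn)$; the boundedness of $I^\alpha$ from $\Lh$ into $L^{q,\mu}(\Rn)$ in the Sobolev-Spanne range is already known (see the remark after Lemma~\ref{lem:modularB}), so only the $(V_\infty)$ property needs to be established.

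Given $f \in \Vi$ and $\varepsilon > 0$, the vanishing property provides $R = R(\varepsilon) > 0$ such that $\mathfrak{M}_{p,\lambda}(f;x,t)^{1/p} < \varepsilon^{1/p}$ for every $t \geq R$ and every $x \in \Rn$. For $r \geq R$, the integration variable $t$ in the modular estimate of Lemma~\ref{lem:modularB} ranges over $[r,\infty) \subseteq [R,\infty)$, so
\begin{equation*}
	\mathfrak{M}_{q,\mu}(I^\alpha f;x,r) \lesssim r^{n-\mu}\, \varepsilon^{q/p} \Big( \int_r^\infty t^{\frac{\lambda}{p}-\frac{n}{q}-1}\, dt \Big)^{q}
\end{equation*}
uniformly in $x \in \Rn$.

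The remaining point is the convergence of the above integral, which requires $\frac{\lambda}{p} - \frac{n}{q} < 0$. From $\frac{1}{q} = \frac{1}{p} - \frac{\alpha}{n}$ we get $\frac{n}{q} = \frac{n}{p} - \alpha$, and the assumption $\lambda < n - \alpha p$ yields $\frac{\lambda}{p} < \frac{n}{p} - \alpha = \frac{n}{q}$, so the integral equals a constant multiple of $r^{\frac{\lambda}{p}-\frac{n}{q}}$. Using the relation $\frac{\lambda}{p}=\frac{\mu}{q}$, the $r$-powers collapse:
\begin{equation*}
	r^{n-\mu}\, \Big( r^{\frac{\lambda}{p}-\frac{n}{q}} \Big)^{q} = r^{n-\mu + q\lambda/p - n} = r^{\mu-\mu} = 1,
\end{equation*}
so $\mathfrak{M}_{q,\mu}(I^\alpha f;x,r) \lesssim \varepsilon^{q/p}$ uniformly in $x$ and in $r \geq R$. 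Letting $r \to \infty$ and then $\varepsilon \to 0$ gives $I^\alpha f \in V_\infty L^{q,\mu}(\Rn)$, which completes the reduction.

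I do not expect any serious obstacle: the argument is essentially bookkeeping once the correct substitutes for Lemma~\ref{lem:modularA} and for the pointwise dominations are in place. The only delicate point is verifying that the exponent $\frac{\lambda}{p}-\frac{n}{q}-1$ actually forces convergence at infinity (rather than at $r$), which is precisely what the hypothesis $\lambda < n-\alpha p$ guarantees, and checking that the $r$-powers cancel thanks to $\lambda/p=\mu/q$; these two conditions are the natural scaling constraints of the Sobolev-Spanne embedding and tell us that the statement is sharp within this framework.
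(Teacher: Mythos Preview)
Your proof is correct and follows precisely the route the paper itself points to: reduce to $I^\alpha$ via \eqref{ineq:HMIalpha}, \eqref{ineq:mathcalHIalpha}, then mimic the proof of Theorem~\ref{the:Vinfty-singular} with Lemma~\ref{lem:modularB} in place of Lemma~\ref{lem:modularA}, checking that $\lambda<n-\alpha p$ gives convergence of $\int_r^\infty t^{\lambda/p-n/q-1}\,dt$ and that $\lambda/p=\mu/q$ makes the $r$-powers cancel. The paper explicitly notes this direct argument works and then, merely as a variant, recasts it via the substitution $t=rs$ and dominated convergence; the two presentations are equivalent.
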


\begin{proof}
	Since $I^\alpha$ is bounded from $\Lh$ into $L^{q,\mu}(\Rn)$ (this is a well known result by Spanne and published in \cite{Pee69}), the same norm inequalities also hold for the operators $M^\alpha$, $H^\alpha$ and $\mathcal{H}^\alpha$ in virtue of the estimates \eqref{ineq:HMIalpha} and \eqref{ineq:mathcalHIalpha}. Moreover, by the same estimates, the preservation of the vanishing property at infinity by the maximal and the Hardy operators follows from the corresponding preservation by the action of the Riesz potential operator. Hence, it remains to show that
	$$ \sup_{x\in\Rn}\mathfrak{M}_{p,\lambda}(f;x,r) \to 0 \quad\quad \Rightarrow \quad\quad\sup_{x\in\Rn}\mathfrak{M}_{q,\mu}(I^\alpha f;x,r)\to 0 \quad \quad \text{as}\quad r\to\infty.$$
	This can be done as in the proof of Theorem~\ref{the:Vinfty-singular}, but now using the modular estimate given in Lemma~\ref{lem:modularB} instead of that in Lemma~\ref{lem:modularA}. Nevertheless, we write the proof in slightly different terms by applying the Lebesgue dominated convergence theorem. For any $x\in\Rn$ and $r>0$, we have
	\begin{eqnarray}\label{ineq:aux1}
		\mathfrak{M}_{q,\mu}(I^\alpha f;x,r) & \lesssim & r^{n-\mu}\, \Big( \int_r^\infty t^{\frac{\lambda}{p}-\frac{n}{q}-1} \big(\mathfrak{M}_{p,\lambda}(f;x,t)\big)^{\frac{1}{p}} \,dt \Big)^{q} \nonumber\\
		& = & \Big( \int_1^\infty s^{\frac{\lambda}{p}-\frac{n}{q}-1} \big(\mathfrak{M}_{p,\lambda}(f;x,rs)\big)^{\frac{1}{p}} \,ds \Big)^{q}.
	\end{eqnarray}
	Since the implicit constant does not depend on $x\in\Rn$ and $r>0$, and the integrand has admits an integrable dominant (note that $f\in\Lh$ and $\lambda/p<n/q$), then the right-hand side of \eqref{ineq:aux1} tends to $0$ as $r\to\infty$ (uniformly on $x$). This gives the desired result.
\end{proof}

\begin{theorem}\label{the:Vinfty-Adams}
	Let $0<\alpha<n$, $0\leq\lambda<n$, $1<p<(n-\lambda)/\alpha$ and $1/q=1/p-\alpha/(n-\lambda)$. Then $T^\alpha$ is bounded from $\Vi$ into $V_\infty L^{q,\lambda}(\Rn)$.
\end{theorem}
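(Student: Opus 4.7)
The plan is to combine a Hedberg-type pointwise inequality with the already known invariance of $V_\infty L^{p,\lambda}(\Rn)$ under the maximal operator (Corollary~\ref{cor:Vi-maximal}), avoiding any new modular estimate tailored to the Adams exponent relation.

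First, I would reduce to the Riesz potential $I^\alpha$. Exactly as in the proof of Theorem~\ref{the:Vinfty-Spanne}, the pointwise bounds \eqref{ineq:HMIalpha} and \eqref{ineq:mathcalHIalpha} show that $|T^\alpha f(x)| \lesssim I^\alpha(|f|)(x)$ for $T^\alpha\in\{M^\alpha,H^\alpha,\mathcal{H}^\alpha\}$, so the vanishing property at infinity for the other three operators is inherited from that of $I^\alpha$. The norm boundedness $I^\alpha : L^{p,\lambda}\to L^{q,\lambda}$ in this Adams regime is classical (Adams' theorem), hence we may restrict attention to proving that $I^\alpha f \in V_\infty L^{q,\lambda}(\Rn)$ whenever $f\in V_\infty L^{p,\lambda}(\Rn)$.

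The key ingredient is the Morrey-Hedberg pointwise estimate
\begin{equation*}
	\big|I^\alpha f(y)\big| \lesssim \big(Mf(y)\big)^{p/q}\, \|f\|_{p,\lambda}^{\,1-p/q}, \qquad y\in\Rn,
\end{equation*}
valid under the hypotheses on $p,q,\alpha,\lambda$ of the theorem. This is obtained by splitting $I^\alpha f(y)$ at radius $\rho>0$: a standard dyadic decomposition on $\{|y-z|<\rho\}$ controls the near part by $c\,\rho^\alpha Mf(y)$, while on $\{|y-z|\ge \rho\}$ a dyadic decomposition together with Hölder's inequality and the Morrey norm gives $c\,\rho^{\alpha-(n-\lambda)/p}\|f\|_{p,\lambda}$ (the geometric series converges precisely because $\alpha p<n-\lambda$). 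Optimising in $\rho$ yields Hedberg's inequality; the exponent $p/q$ comes from $1-\alpha p/(n-\lambda)=p/q$.

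Raising Hedberg's inequality to the power $q$ and integrating over $B(x,r)$,
\begin{equation*}
	\int_{B(x,r)} \big|I^\alpha f(y)\big|^q\,dy \lesssim \|f\|_{p,\lambda}^{\,q-p} \int_{B(x,r)} \big(Mf(y)\big)^p\,dy,
\end{equation*}
which after dividing by $r^\lambda$ gives the key modular estimate
\begin{equation*}
	\mathfrak{M}_{q,\lambda}\big(I^\alpha f;x,r\big) \lesssim \|f\|_{p,\lambda}^{\,q-p}\; \mathfrak{M}_{p,\lambda}\big(Mf;x,r\big).
\end{equation*}
The implicit constant is independent of $x$ and $r$. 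Since $f\in V_\infty L^{p,\lambda}(\Rn)$, Corollary~\ref{cor:Vi-maximal} yields $Mf\in V_\infty L^{p,\lambda}(\Rn)$, so the right-hand side tends to $0$ as $r\to\infty$, uniformly in $x$. This proves $I^\alpha f\in V_\infty L^{q,\lambda}(\Rn)$, completing the argument. The only non-routine point is establishing Hedberg's inequality in the Morrey setting; once it is in hand, the remaining steps are essentially formal and the whole proof mirrors Theorem~\ref{the:Vinfty-Spanne} but with $M$ replacing the direct use of a modular estimate.
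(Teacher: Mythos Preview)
Your proof is correct and follows essentially the same route as the paper: reduce to $I^\alpha$ via \eqref{ineq:HMIalpha}--\eqref{ineq:mathcalHIalpha}, invoke the Morrey--Hedberg pointwise inequality \eqref{ineq:potential-maximal} to obtain $\mathfrak{M}_{q,\lambda}(I^\alpha f;x,r)\lesssim \|f\|_{p,\lambda}^{q-p}\,\mathfrak{M}_{p,\lambda}(Mf;x,r)$, and then apply Corollary~\ref{cor:Vi-maximal}. The only difference is that you sketch the derivation of the Hedberg estimate, whereas the paper simply cites it.
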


\begin{proof}
	As in the proof of Theorem~\ref{the:Vinfty-Spanne} we only have to prove the statement for the Riesz potential operator. The $L^{p,\lambda} \to L^{q,\lambda}$- boundedness of $I^\alpha$ is well known (cf. \cite{Adam75}). To show the preservation of the vanishing property at infinity we make use of the pointwise estimate of the Riesz potential operator in terms of the maximal function. It is known that there exists $c>0$ such that
	\begin{equation}\label{ineq:potential-maximal}
		\left| I^\alpha f(x)\right| \leq c\, \big(Mf(x)\big)^{\frac{p}{q}} \, \|f\|_{p,\lambda}^{1-\frac{p}{q}}
	\end{equation}
	for all $f\in\Lh$ and $x\in\Rn$ (see, for instance, \cite{BurtNSam16}, \cite[Chapter~7]{Adam15}). From \eqref{ineq:potential-maximal} we get
	$$\mathfrak{M}_{q,\lambda}\big(I^\alpha f;x,r\big) \lesssim \|f\|_{p,\lambda}^{q-p} \, \mathfrak{M}_{p,\lambda}\big(Mf;x,r\big)
	$$
	for $r>0$ and $x\in\Rn$. If $f\in\Vi$ then $Mf\in\Vi$ by Corollary~\ref{cor:Vi-maximal}. Consequently, we have $I^\alpha f \in V_\infty L^{q,\lambda}(\Rn)$ taking into account the previous estimate.
\end{proof}


\subsection{Preservation of the property $(V^\ast)$}\label{sec:Vast}

First we show that the space $\Vast$ is invariant under the action of the Hardy-Littlewood maximal operator.

\begin{theorem}\label{the:VastMax}
	Let $1<p<\infty$ and $0\leq\lambda<n$. Then the maximal operator $M$ is bounded in $\Vast$.
\end{theorem}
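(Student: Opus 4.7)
The plan is to reduce the statement to showing that $\mathcal{A}_{N,p}(Mf) \to 0$ as $N \to \infty$ for every $f \in \Vast$, since the $\Lh$-boundedness of $M$ is already available (as $M$ is a $p$-admissible singular type operator). Since $B(x,1)\cap \{|y|>N\} = \emptyset$ whenever $|x|<N-1$, the supremum in $\mathcal{A}_{N,p}$ needs to be taken only over $x$ with $|x|\geq N-1$. For such $x$, I would decompose $f=g+h$ with $g=f\chi_{B(0,N/2)}$ and $h=f\chi_{\Rn\setminus B(0,N/2)}$; by sublinearity $Mf\leq Mg+Mh$, and the task becomes to show that both $\int_{B(x,1)}|Mg|^p\,dy$ and $\int_{B(x,1)}|Mh|^p\,dy$ tend to zero uniformly in $x$.

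For the $Mg$ piece, any ball $B(y,t)$ that meets $\mathrm{supp}\,g\subset B(0,N/2)$ with $y \in B(x,1)$ (hence $|y|\geq N-2$) must have $t\geq N/2-2$. Combining H\"older's inequality with the Morrey-norm bound $\int_{B(y,t)}|f|^p\,dz\leq \|f\|_{p,\lambda}^p\,t^\lambda$ yields the pointwise estimate $Mg(y)\lesssim N^{(\lambda-n)/p}\,\|f\|_{p,\lambda}$. Since $\lambda<n$, integrating the $p$-th power over $B(x,1)$ gives a bound $\lesssim N^{\lambda-n}\|f\|_{p,\lambda}^p \to 0$.

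The heart of the argument is the $Mh$ piece. Applying Lemma~\ref{lem:modularA} to $M$ acting on $h$ at radius $r=1$ gives
\begin{equation*}
\int_{B(x,1)}|Mh|^p\,dy \;\lesssim\; \left(\int_1^\infty t^{(\lambda-n)/p-1}\bigl(\mathfrak{M}_{p,\lambda}(h;x,t)\bigr)^{1/p}\,dt\right)^p.
\end{equation*}
I would then estimate $\mathfrak{M}_{p,\lambda}(h;x,t)$ in two complementary ways: (i) covering $B(x,t)$ by at most $\lesssim t^n$ unit balls and invoking the $(V^\ast)$-property of $f$ yields $\mathfrak{M}_{p,\lambda}(h;x,t)\lesssim t^{n-\lambda}\mathcal{A}_{N/2,p}(f)$; (ii) trivially $\mathfrak{M}_{p,\lambda}(h;x,t)\leq\|f\|_{p,\lambda}^p$. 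Splitting the integral at a threshold $T\geq 1$ and using (i) on $[1,T]$ (the weights collapse to $t^{-1}$, giving $\mathcal{A}_{N/2,p}(f)^{1/p}\log T$) and (ii) on $[T,\infty)$ (giving $\|f\|_{p,\lambda}\,T^{(\lambda-n)/p}$), one proceeds as follows: given $\varepsilon>0$, first fix $T$ large enough to kill the tail, then take $N$ large so that $\mathcal{A}_{N/2,p}(f)^{1/p}\log T<\varepsilon$, which is possible because $f\in\Vast$.

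The main obstacle is precisely the covering step (i): the $(V^\ast)$-property is phrased on unit balls only, and transplanting it to balls of radius $t$ costs a factor $t^n$ which is \emph{exactly} enough to make the resulting bound produce a logarithmic divergence at $\infty$ if used naively. The decomposition at $T$ together with the trivial Morrey-norm bound (ii) is what makes the two estimates interlock to yield smallness uniformly in $x$, and thereby $Mf\in\Vast$.
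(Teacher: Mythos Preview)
Your argument is correct, but it follows a different route from the paper's own proof, and the comparison is worth noting.

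\textbf{Decomposition.} You split $f=g+h$ according to whether the point lies in $B(0,N/2)$ or not. The paper instead splits $f=f_1+f_2$ with $f_1=f\chi_{\Omega_{x,N/2}}$, where $\Omega_{x,N/2}=B(x,2)\cap(\Rn\setminus B(0,N/2))$; that is, the ``main'' piece $f_1$ is simultaneously localised near $x$ and far from the origin. This makes $f_1$ amenable to a direct $L^p$-boundedness argument: $\|Mf_1\|_{L^p}^p\lesssim\|f_1\|_{L^p}^p=\int_{\Omega_{x,N/2}}|f|^p$, and the right-hand side is exactly the quantity controlled by $(V^\ast)$ (for balls of radius~$2$). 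Your piece $g$ is instead handled by a pointwise Morrey-norm estimate using that the relevant averaging radius is forced to be $\gtrsim N$.

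\textbf{Hard piece.} For the remaining part you invoke Lemma~\ref{lem:modularA} at $r=1$ and then split the resulting integral in~$t$ at a threshold $T$, using on $[1,T]$ a covering of $B(x,t)$ by $O(t^n)$ unit balls to import the $(V^\ast)$-control. The paper does not pass through Lemma~\ref{lem:modularA} here; it works directly with the supremum defining $Mf_2$, splitting it at a cut-off radius $t_1$. Large radii are handled by the Morrey norm (as you do), while for $t<t_1$ a geometric observation forces $t>1$, after which Minkowski's inequality and a change of variables reduce matters to an expression to which dominated convergence applies.

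\textbf{What each buys.} Your route is more modular: it recycles the pointwise modular estimate that the paper already relies on elsewhere, and the $\varepsilon$--$T$--$N$ interlocking you describe is clean. The paper's route is self-contained for this theorem (it does not need Lemma~\ref{lem:modularA}) and avoids the covering argument, at the price of a slightly more hands-on manipulation of the maximal supremum and an appeal to dominated convergence. Both are sound; the only cosmetic point in your write-up is that $\mathcal{A}_{N/2,p}$ should be read with $\lfloor N/2\rfloor$ (or with the obvious extension of $\mathcal{A}_{\cdot,p}$ to real indices), which is harmless.
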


\begin{proof}
	Since $M$ is bounded in $\Lh$ (cf. \cite{ChiaFras87}) we only have to show that it preserves the vanishing property ($V^\ast$), that is
	\begin{equation*}
		\lim_{N\to \infty} \mathcal{A}_{N,p}(f) = 0 \quad \Rightarrow \quad \lim_{N\to \infty} \mathcal{A}_{N,p}(Mf) = 0.
	\end{equation*}
	Given $x\in\Rn$ and $N\in\N$, we split $f$ into
	$$
	f=f_1+f_2, \ \ \ \text{with} \ \ \ f_1:=f\,\chi_{\Omega_{x,N/2}}, \ \ \ f_2=f\,\chi_{\Rn\setminus\Omega_{x,N/2}},
	$$
	where, for short, we use the notation
	$$\Omega_{x,N}:= B(x,2) \cap \big(\Rn\setminus B(0,N)\big).$$
	
	Since $M$ is sublinear, we have
	\begin{equation}\label{vstar:split}
		\mathcal{A}_{N,p}(Mf) \lesssim \mathcal{A}_{N,p}(Mf_1) + \mathcal{A}_{N,p}(Mf_2).
	\end{equation}
	
	
	We show next that both quantities in the the right-hand side of \eqref{vstar:split} tend to zero as $N\to\infty$.
	The boundedness of $M$ in $L^p(\Rn)$ gives
	\begin{eqnarray}
		\int_{B(x,1)} \big(M(f_1)(y)\big)^p \chi_N(y)\,dy  & \leq & \int_{\Rn} \big(M(f_1)(y)\big)^p dy \nonumber\\
		& \lesssim & \int_{\Rn} |f_1(y)|^p dy \nonumber\\
		& = & \int_{\Omega_{x,N/2}} |f(y)|^p \,dy
	\end{eqnarray}
	with the implicit constant independent of $x$, $N$ and $f$. Since $f\in \Vast$, the right hand side above tends to zero uniformly on $x$ as $N\to\infty$ (note that the property $(V^\ast)$ does not depend on the particular value of the radius taken in the balls centered at $x$, cf. \cite[Lemma~3.4]{AlmSam17}). Therefore,
	$ \lim_{N\to\infty}\mathcal{A}_{N,p}(Mf_1) =0.$
	
	Now we deal with the second term in the sum in $\eqref{vstar:split}$. Let $\varepsilon>0$ be arbitrary. Then there exists $t_1>1$ such that $t^{\lambda-n} < \varepsilon$ for all $t\ge t_1$. For such fixed $t_1$, we have
	$$
	\int_{B(x,1)} \big(M(f_2)(y)\big)^p \chi_N(y)\,dy\lesssim I_1(x,N) + I_2(x,N)
	$$
	where
	$$
	I_1(x,N):= \int_{B(x,1)} \chi_N(y) \, \sup_{0<t< t_1} \left[\frac{1}{|B(y,t)|} \int_{B(y,t)} |f(z)|\,\chi_{\Rn\setminus\Omega_{x,N/2}}(z)\,dz\right]^p dy
	$$
	and
	$$
	I_2(x,N):= \int_{B(x,1)} \chi_N(y) \, \sup_{t\ge t_1} \left[\frac{1}{|B(y,t)|} \int_{B(y,t)} |f(z)|\,\chi_{\Rn\setminus\Omega_{x,N/2}}(z)\,dz\right]^p dy.
	$$
	First we estimate $I_2(x,N)$. By H\"{o}lder's inequality we have
	$$
	\frac{1}{|B(y,t)|} \int_{B(y,t)} |f(z)|\,dz \leq \frac{1}{|B(y,t)|^{1/p}} \, \|f\|_{L^p(B(y,t))}.
	$$
	Hence
	$$I_2(x,N) \leq \int_{B(x,1)} \, \sup_{t\ge t_1} \frac{1}{|B(y,t)|} \, \int_{B(y,t)} |f(z)|^p dz\,dy \lesssim \sup_{t\ge t_1} \frac{1}{t^{n-\lambda}} \, \|f\|^p_{p,\lambda} \leq \varepsilon \; \|f\|^p_{p,\lambda}.
	$$
	As regards $I_1(x,N)$ we have two different cases to be analysed for $z\in B(y,t)$ and $z\notin\Omega_{x,N/2}$. If $z\in B(0,N/2)$ then $t>|z-y|\geq |y|-|z|>N/2$. Thus there is no contribution to the supremum on $t\in(0,t_1)$ for $N\ge 2t_1$. If $z\notin B(x,2)$ then $t>|z-y| \ge |z-x| - |y-x| \geq 1$. Hence it remains to handle $I_1(x,N)$ when the supremum is taken over all $t\in(1,t_1)$. For such values of $t$ we have
	$$t^{-n} \int_{B(y,t)} |f(z)|\,\chi_{\Rn\setminus\Omega_{x,N/2}}(z)\,dz \leq  \int_{B(y,t_1)} |f(z)|\,dz = \int_{B(0,t_1)} |f(y-z)|\,dz.
	$$
	Using this, the Minkowski's inequality and a simple change of variables, we get
	\begin{eqnarray*}
		I_1(x,N) & \leq &  \int_{B(x,1)} \chi_N(y) \, \left[\int_{\Rn} \chi_{B(0,t_1)}(z)\,|f(y-z)|\,dz\right]^p dy\\
		& \leq & \left( \int_\Rn \chi_{B(0,t_1)}(z)  \left[ \sup_{v\in\Rn}\int_{B(v,1)} |f(u)|^p \,\chi_{N-|z|}(u)\,du \right]^{1/p}dz\right)^p\\
		& =: &  \left( \int_\Rn g_N(z)\,dz\right)^p
	\end{eqnarray*}
	with the interpretation
	$$\chi_a:=1\quad \text{if} \; a\leq 0 \quad\text{and} \quad \chi_a:=\chi_{\Rn\setminus B(0,a)}\; \text{if}\; a>0.$$
	This gives an uniform bound for $I_1(x,N)$.  Since $f\in \Lh$ and $g_N(z)$ has an integrable majorant (depending on $t_1$), an application of the Lebesgue convergence theorem shows that $\int_\Rn g_N(z)\,dz \rightarrow 0$  as $N\to\infty$, which implies that
	$$I_1(x,N) \rightarrow 0 \quad\;\text{uniformly on $x$,}\quad \text{as} \quad N\to\infty.$$
	The proof is complete.
\end{proof}

\begin{theorem}
	Let $1<p<\infty$ and $0\leq\lambda<n$. Then the operators $M^\sharp$, $H$ and $\mathcal{H}$ are bounded in $\Vast$.
\end{theorem}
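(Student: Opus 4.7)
Since the $L^{p,\lambda}$-boundedness of $M^\sharp$, $H$ and $\mathcal{H}$ is already known, it suffices to check that the vanishing property $(V^\ast)$ is preserved by each of the three operators. For $M^\sharp$ and $H$, the plan is to reduce directly to Theorem~\ref{the:VastMax} via the pointwise dominations $(M^\sharp f)(x) \leq 2(Mf)(x)$ and $|Hf(x)| \leq H(|f|)(x) \leq 2^n v_n (Mf)(x)$ provided by \eqref{ineq:sharp-max} and \eqref{ineq:HMaximal}. Since the functional $f \mapsto \mathcal{A}_{N,p}(f)$ is monotone with respect to $|f|$, these inequalities immediately yield
\[
\mathcal{A}_{N,p}(M^\sharp f) \leq 2^p\,\mathcal{A}_{N,p}(Mf) \quad\text{and}\quad \mathcal{A}_{N,p}(Hf) \leq (2^n v_n)^p\,\mathcal{A}_{N,p}(Mf),
\]
both of which tend to $0$ as $N \to \infty$ by Theorem~\ref{the:VastMax}.

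The operator $\mathcal{H}$ does not admit a pointwise bound by $Mf$, so a separate argument is needed. My plan is to exploit the ``one-sided'' nature of $\mathcal{H}$: if $\chi_N(y)=1$ then $|y|\geq N$, hence the inclusion $\{|z|>|y|\}\subseteq\{|z|>N\}$ gives
\[
|\mathcal{H}f(y)| \leq \int_{|z|>N} \frac{|f(z)|}{|z|^n}\,dz \qquad \text{whenever } |y|\geq N.
\]
The right-hand side depends neither on $y$ nor on $x$, so
\[
\mathcal{A}_{N,p}(\mathcal{H}f) \leq v_n\,\Big(\int_{|z|>N} \tfrac{|f(z)|}{|z|^n}\,dz\Big)^p.
\]
A dyadic decomposition of $\{|z|>N\}$ into annuli $\{2^kN<|z|\leq 2^{k+1}N\}$, combined with H\"older's inequality and the Morrey bound $\big(\int_{B(0,R)}|f|^p\big)^{1/p}\leq R^{\lambda/p}\,\|f\|_{p,\lambda}$, leads to
\[
\int_{|z|>N} \frac{|f(z)|}{|z|^n}\,dz \lesssim N^{(\lambda-n)/p}\,\|f\|_{p,\lambda}\,\sum_{k\geq 0} 2^{k(\lambda-n)/p},
\]
where the series converges and $N^{(\lambda-n)/p}\to 0$ as $N\to\infty$ because $\lambda<n$. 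Note that this argument in fact shows the stronger statement that $\mathcal{H}$ maps the whole space $\Lh$ into $\Vast$, the condition $f\in\Vast$ being unnecessary.

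The only step I would expect to offer any resistance is the $\mathcal{H}$ case, since the absence of a pointwise bound by $Mf$ initially suggests imitating the splitting argument of Theorem~\ref{the:VastMax}. The monotonicity of $|\mathcal{H}f(y)|$ in $|y|$ dissolves this difficulty and reduces the problem to a single uniform tail estimate, with no decomposition of $f$ required.
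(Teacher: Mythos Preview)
Your proof is correct and follows essentially the same approach as the paper: the $M^\sharp$ and $H$ cases are handled identically via the pointwise dominations \eqref{ineq:sharp-max}, \eqref{ineq:HMaximal} and Theorem~\ref{the:VastMax}, and for $\mathcal{H}$ both arguments exploit that $|\mathcal{H}f(y)|$ is controlled by the tail $\int_{|z|>|y|}|f(z)|\,|z|^{-n}\,dz$, which vanishes as $|y|\to\infty$ for $f\in\Lh$. The only difference is presentational: the paper simply asserts $\lim_{|z|\to\infty}\mathcal{H}f(z)=0$ and proceeds, whereas you supply a quantitative dyadic estimate yielding the explicit rate $N^{(\lambda-n)/p}$; both you and the paper observe that this in fact gives the stronger conclusion $\mathcal{H}:\Lh\to\Vast$.
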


\begin{proof}
	The boundedness of $M^\sharp$ and $H$ in $\Vast$ is a consequence of Theorem~\ref{the:VastMax} and inequalities \eqref{ineq:sharp-max}, \eqref{ineq:HMaximal}. The case of the Hardy operator $\mathcal{H}$ requires a different approach since it can not be estimated by the maximal function. Observing that
	$$\lim_{|z|\to\infty} \mathcal{H}f(z) = 0,$$
	then for any $\varepsilon>0$ there exists $N_\varepsilon\in\N$ such that $|\mathcal{H}f(y)| \leq (\varepsilon/{v_n})^{1/p}$ for all $|y|\ge N_\varepsilon$. Therefore
	$$
	\int_{\Rn} \big|\mathcal{H}f(y)\big|^p \chi_{B(x,1)\cap \{y:|y|>N\}}(y)\,dy \leq \varepsilon/{v_n} \int_{\Rn} \chi_{B(x,1)\cap \{y:|y|>N\}}(y)\,dy \leq \varepsilon
	$$
	for all  $N\geq N_\varepsilon$, uniformly on $x\in\Rn$. This shows that $\mathcal{A}_{N,p}\big(\mathcal{H}f\big) \to 0$ as $N\to\infty$.
\end{proof}

\begin{remark}
	As regards the preservation of the property $(V^\ast)$ by the action of the Hardy operator $\mathcal{H}$, one can formulate a stronger result. Indeed, as we can see from the proof above, $\mathcal{H}$ is bounded from the whole space Morrey $\Lh$ into $\Vast$.
\end{remark}

As in Section~\ref{sec:Vi} suppose again that $T^\alpha$, $0<\alpha<n$, denotes any of the operators $I^\alpha$, $M^\alpha$, $H^\alpha$ and $\mathcal{H}^\alpha$.

\begin{theorem}\label{the:Vast-Adams}
	Let $0<\alpha<n$, $0\leq\lambda<n$, $1<p<(n-\lambda)/\alpha$ and $1/q=1/p-\alpha/(n-\lambda)$. Then $T^\alpha$ is bounded from $\Vast$ into $V^{(\ast)} L^{q,\lambda}(\Rn)$.
\end{theorem}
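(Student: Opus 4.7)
The plan is to mirror the strategy used in the proof of Theorem~\ref{the:Vinfty-Adams}. First, thanks to the pointwise estimates \eqref{ineq:HMIalpha} and \eqref{ineq:mathcalHIalpha}, it suffices to treat the case $T^\alpha = I^\alpha$; the cases of $M^\alpha$, $H^\alpha$, $\mathcal{H}^\alpha$ then follow by domination. The $L^{p,\lambda}\to L^{q,\lambda}$ boundedness of $I^\alpha$ in the Morrey norm is Adams' classical result, so only the preservation of the property $(V^\ast)$ remains to be checked.

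For that, I would invoke the Hedberg-type pointwise bound \eqref{ineq:potential-maximal}, which under the Adams relation $1/q = 1/p - \alpha/(n-\lambda)$ gives
\begin{equation*}
	|I^\alpha f(y)| \leq c\,(Mf(y))^{p/q}\,\|f\|_{p,\lambda}^{1-p/q}, \qquad y\in\Rn.
\end{equation*}
Raising this to the power $q$ and integrating against $\chi_{B(x,1)}\,\chi_N$ yields
\begin{equation*}
	\int_{B(x,1)} |I^\alpha f(y)|^q \,\chi_N(y)\,dy \;\lesssim\; \|f\|_{p,\lambda}^{\,q-p} \int_{B(x,1)} (Mf(y))^p\,\chi_N(y)\,dy,
\end{equation*}
with constant independent of $x$ and $N$. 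Taking the supremum over $x\in\Rn$ gives
\begin{equation*}
	\mathcal{A}_{N,q}(I^\alpha f) \;\lesssim\; \|f\|_{p,\lambda}^{\,q-p}\,\mathcal{A}_{N,p}(Mf).
\end{equation*}

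Now assuming $f\in\Vast$, Theorem~\ref{the:VastMax} ensures that $Mf\in\Vast$ as well, i.e.\ $\mathcal{A}_{N,p}(Mf)\to 0$ as $N\to\infty$. The displayed inequality then forces $\mathcal{A}_{N,q}(I^\alpha f)\to 0$, so $I^\alpha f \in V^{(\ast)}L^{q,\lambda}(\Rn)$, completing the proof. I do not anticipate a real obstacle here: the argument is essentially a one-line reduction to Theorem~\ref{the:VastMax} via the Hedberg estimate, and the only point worth a moment's care is that the exponents in \eqref{ineq:potential-maximal} are precisely matched to the Adams relation $\alpha/(n-\lambda) = 1/p - 1/q$, so that the exponent $p/q$ appearing above is well-defined and the reduction works cleanly.
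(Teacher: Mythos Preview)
Your proof is correct and follows essentially the same route as the paper: both use the Hedberg-type pointwise inequality \eqref{ineq:potential-maximal} to obtain $\mathcal{A}_{N,q}(I^\alpha f) \lesssim \|f\|_{p,\lambda}^{\,q-p}\,\mathcal{A}_{N,p}(Mf)$ and then invoke Theorem~\ref{the:VastMax}. Your write-up is slightly more detailed in spelling out the reduction to $I^\alpha$ via \eqref{ineq:HMIalpha} and \eqref{ineq:mathcalHIalpha}, but the argument is the same.
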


\begin{proof}
	By \eqref{ineq:potential-maximal} we get
	$$\mathcal{A}_{N,q}\big(I^\alpha f\big) \lesssim \|f\|_{p,\lambda}^{q-p} \, \mathcal{A}_{N,p}(Mf)
	$$
	with the implicit constant independent of $f$ and $N\in\N$. If $f\in\Vast$ then $Mf\in\Vast$ by Theorem~\ref{the:VastMax}. Consequently, we also have $I^\alpha f \in V^{(\ast)} L^{q,\lambda}(\Rn)$ by the previous estimate.
\end{proof}

\subsection{Invariance of the closure of $C_0^\infty(\Rn)$}\label{sec:Vziast}

As shown in \cite{AlmSam17}, the subspace
$$\Vziast:= \Vz\cap\Vi\cap\Vast$$
coincides with the closure of the class $C_0^\infty(\Rn)$ in Morrey norm. Note that this closure plays an important role in harmonic analysis on Morrey spaces since its dual provides a predual space for Morrey spaces (cf. \cite{Adam15,AdamXiao12,RosTri15,Tri15}). Moreover (cf. \cite{AlmSam17}), we have the strict embeddings
$$
\Vziast \ \subsetneqq \ \Vz\cap\Vi \ \subsetneqq \ \Vz \ \subsetneqq \ \Lh.
$$

The next corollaries are immediate consequences of the results obtained in Sections~\ref{sec:Vi}, \ref{sec:Vast} and the already known corresponding boundedness in the vanishing space $\Vz$.

\begin{corollary}
	Let $1<p<\infty$ and $0\leq\lambda<n$. Then the operators $M$, $M^\sharp$, $H$ and $\mathcal{H}$ are bounded in $\Vziast$.
\end{corollary}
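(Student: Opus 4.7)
The plan is to observe that $\Vziast$ is defined as the intersection of the three vanishing subspaces $\Vz$, $\Vi$ and $\Vast$, so boundedness of an operator $T$ in $\Vziast$ reduces to proving that $T$ maps each of these three subspaces to itself. Therefore I would decompose the claim into three separate invariance statements and invoke an already established result for each one.

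First I would fix an operator $T\in\{M, M^\sharp, H, \mathcal{H}\}$ and take $f\in\Vziast$. By definition of the intersection, $f$ simultaneously satisfies the vanishing properties $(V_0)$, $(V_\infty)$ and $(V^\ast)$. The $\Vz$-invariance of each of the four operators is recorded in the Remark following Lemma~\ref{lem:modularA} (it follows from the results in \cite{NSam13} since $M$, $M^\sharp$, $H$ and $\mathcal{H}$ are all $p$-admissible singular type operators). The $\Vi$-invariance is exactly Corollary~\ref{cor:Vi-maximal}. The $\Vast$-invariance is Theorem~\ref{the:VastMax} for $M$ and the subsequent theorem for $M^\sharp$, $H$ and $\mathcal{H}$.

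Putting these three facts together, for any $f\in\Vziast$ the function $Tf$ belongs to $\Vz$, $\Vi$ and $\Vast$ simultaneously, hence $Tf\in\Vziast$. The norm estimate $\|Tf\|_{p,\lambda}\lesssim \|f\|_{p,\lambda}$ follows from the boundedness of $T$ in the ambient space $\Lh$, which is already known (cf.\ \cite{ChiaFras87} for $M$ and the chain of pointwise estimates \eqref{ineq:sharp-max}, \eqref{ineq:HMaximal} together with the maximal bound for the remaining operators).

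There is essentially no obstacle here: the statement is literally the conjunction of invariance results proved in Sections~\ref{sec:Vi} and \ref{sec:Vast}, together with the already established $\Vz$-boundedness. The only thing to make sure of is that the parameter ranges $1<p<\infty$, $0\le\lambda<n$ used in the corollary match those in all three inputs, which is indeed the case. Thus the proof is a one-line consequence of these three pieces.
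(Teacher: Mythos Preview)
Your proposal is correct and matches the paper's own treatment: the corollary is stated there as an ``immediate consequence'' of the $\Vz$-, $\Vi$- and $\Vast$-invariance results from the preceding sections, exactly as you lay out. One small correction to your justification of the ambient $\Lh$-bound: the operator $\mathcal{H}$ is \emph{not} pointwise dominated by $M$ (the paper says this explicitly in the proof of the $\Vast$-theorem), so for $\mathcal{H}$ you should cite its $p$-admissibility and the Remark after Lemma~\ref{lem:modularA} rather than the pointwise chain \eqref{ineq:sharp-max}--\eqref{ineq:HMaximal}.
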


\begin{corollary}
	Let $0<\alpha<n$, $0\leq\lambda<n$, $1<p<(n-\lambda)/\alpha$ and $1/q=1/p-\alpha/(n-\lambda)$. Then $T^\alpha$ is bounded from $\Vziast$ into $V^{(\ast)}_{0,\infty} L^{q,\lambda}(\Rn)$.
\end{corollary}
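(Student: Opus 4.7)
The plan is to reduce everything to the Riesz potential and to handle each of the three vanishing properties that define $V^{(\ast)}_{0,\infty}L^{q,\lambda}(\Rn)$ separately, combining the two preceding theorems with one additional argument for the $(V_0)$-part.

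First, by the pointwise dominations \eqref{ineq:HMIalpha} and \eqref{ineq:mathcalHIalpha}, each of the operators $M^\alpha, H^\alpha, \mathcal{H}^\alpha$ is controlled by a constant multiple of $I^\alpha(|\cdot|)$. Hence, in what follows, it is enough to prove that $I^\alpha$ maps $\Vziast$ into $V_0L^{q,\lambda}(\Rn)\cap V_\infty L^{q,\lambda}(\Rn)\cap V^{(\ast)}L^{q,\lambda}(\Rn)$; boundedness on the Morrey norm in the Adams setting is the classical result of \cite{Adam75}. Now fix $f\in\Vziast$; in particular $f\in\Vz\cap\Vi\cap\Vast$.

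Next, the preservation of the vanishing properties at infinity and at $(\ast)$ are immediate: Theorem~\ref{the:Vinfty-Adams} yields $I^\alpha f\in V_\infty L^{q,\lambda}(\Rn)$, and Theorem~\ref{the:Vast-Adams} yields $I^\alpha f\in V^{(\ast)}L^{q,\lambda}(\Rn)$, both under exactly the parameter restrictions assumed in the statement.

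The remaining step—and the only point that needs a small additional argument—is to show that $I^\alpha f\in V_0L^{q,\lambda}(\Rn)$. For this I would use the same Hedberg-type pointwise estimate \eqref{ineq:potential-maximal} that underlies the proof of Theorem~\ref{the:Vinfty-Adams}. Raising it to the $q$-th power, integrating over $B(x,r)$, and dividing by $r^\lambda$, one obtains
\begin{equation*}
\mathfrak{M}_{q,\lambda}\bigl(I^\alpha f;x,r\bigr)\lesssim \|f\|_{p,\lambda}^{q-p}\,\mathfrak{M}_{p,\lambda}\bigl(Mf;x,r\bigr),
\end{equation*}
uniformly in $x\in\Rn$ and $r>0$. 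Since $f\in\Vz$, the remark after Lemma~\ref{lem:modularA} gives $Mf\in\Vz$, so the right-hand side tends to $0$ uniformly in $x$ as $r\to 0$; this is exactly the $(V_0)$-property at the Adams exponent $(q,\lambda)$.

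Putting the three parts together gives $T^\alpha f\in V^{(\ast)}_{0,\infty}L^{q,\lambda}(\Rn)$, with norm controlled by $\|f\|_{p,\lambda}$ (via the Adams boundedness and the sublinear/monotone dominations). I do not expect a genuine obstacle here, as the argument is a direct assembly of the already-established results; the only mild subtlety is that Lemma~\ref{lem:modularB} is calibrated to the Spanne balance $\lambda/p=\mu/q$, so the $(V_0)$-part must instead be extracted from the pointwise estimate \eqref{ineq:potential-maximal}, which respects the Adams relation $1/q=1/p-\alpha/(n-\lambda)$ with the common exponent $\lambda$.
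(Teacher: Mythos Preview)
Your proof is correct and follows essentially the same assembly as the paper: combine Theorems~\ref{the:Vinfty-Adams} and~\ref{the:Vast-Adams} for the $(V_\infty)$ and $(V^\ast)$ parts, and invoke the already known $V_0$-boundedness for the remaining part. The paper simply states that the $(V_0)$-part is ``already known'' without further detail; your explicit derivation via the Hedberg estimate \eqref{ineq:potential-maximal} and the invariance of $\Vz$ under $M$ is a legitimate (and arguably cleaner) way to supply that step directly within the Adams framework, avoiding any reliance on the Spanne-type Lemma~\ref{lem:modularB}.
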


We end this section with a further result for singular integral operators $S$ defined by \eqref{def:singular}, with the kernel satisfying \eqref{size-cond}. One knows that $S$ is bounded in $\Vz$, for $1<p<\infty$ and $0\leq \lambda < n$ (cf. \cite[Theorem~5.1]{NSam13}). On the other hand, we have seen that $S$ is also bounded in $\Vi$ (cf. Corollary~\ref{cor:Vi-maximal}). Unfortunately, we do not know whether $S$ preserves the vanishing property $(V^\ast)$. Nevertheless, we have the following result:

\begin{theorem}
	Let $1<p<\infty$ and $0\leq \lambda < n$. Then $S$ is bounded from $\Vziast$ into itself.
\end{theorem}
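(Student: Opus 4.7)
The plan is to exploit the density of $C_0^\infty(\Rn)$ in $\Vziast$ together with the closedness of $\Vziast$ in Morrey norm. Since each of $\Vz$, $\Vi$ and $\Vast$ is closed in $\Lh$, so is their intersection $\Vziast$, which by \cite{AlmSam17} coincides with the $\|\cdot\|_{p,\lambda}$-closure of $C_0^\infty(\Rn)$. The operator $S$ is bounded in $\Lh$ (as a $p$-admissible singular type operator, cf.\ the remark after Lemma~\ref{lem:modularA}), so it suffices to verify that $S\varphi \in \Vziast$ for every $\varphi \in C_0^\infty(\Rn)$. The full statement then follows by approximating $f \in \Vziast$ by a sequence $\varphi_k \in C_0^\infty(\Rn)$ in Morrey norm: $S\varphi_k \to Sf$ in $\|\cdot\|_{p,\lambda}$, and since $\{S\varphi_k\} \subset \Vziast$ which is closed, one concludes $Sf \in \Vziast$.

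For a fixed $\varphi \in C_0^\infty(\Rn)$, the membership $S\varphi \in \Vz$ is \cite[Theorem~5.1]{NSam13}, while $S\varphi \in \Vi$ follows from Corollary~\ref{cor:Vi-maximal}. Thus the only nontrivial point is to check the property $(V^\ast)$ for $S\varphi$. Assume $\supp \varphi \subset B(0,R)$. For $|y| > 2R$ and $z \in \supp \varphi$ one has $|y-z| \geq |y| - |z| \geq |y|/2$, so the size condition \eqref{size-cond} yields the pointwise estimate
$$
|S\varphi(y)| \leq \int_{B(0,R)} |K(y,z)|\, |\varphi(z)|\, dz \lesssim |y|^{-n}\, \|\varphi\|_{L^1(\Rn)}.
$$
Consequently, for every $N > 2R$, uniformly in $x \in \Rn$,
$$
\int_{B(x,1)} |S\varphi(y)|^p\, \chi_N(y)\, dy \lesssim N^{-np}\, \|\varphi\|_{L^1(\Rn)}^p,
$$
which tends to $0$ as $N \to \infty$. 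Hence $\mathcal{A}_{N,p}(S\varphi) \to 0$ and $S\varphi \in \Vast$.

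The main obstacle is the $(V^\ast)$ property, since no direct pointwise control of $Sf$ is available for general $f \in \Vziast$, as noted in the paragraph preceding the theorem. The density argument circumvents this difficulty by reducing the problem to the case of a smooth compactly supported $\varphi$, for which $S\varphi$ has the decay $|S\varphi(y)| \lesssim |y|^{-n}$ away from $\supp \varphi$ and $(V^\ast)$ becomes essentially immediate. The remainder of the argument is purely functional-analytic: continuity of $S$ on $\Lh$ transports the vanishing properties from the approximating sequence to the limit.
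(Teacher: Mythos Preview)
Your proof is correct and follows essentially the same approach as the paper: both arguments use the density of $C_0^\infty(\Rn)$ in $\Vziast$, the closedness of $\Vziast$ in $\Lh$, the boundedness of $S$ on $\Lh$, and the pointwise decay $|S\varphi(y)|\lesssim |y|^{-n}$ for compactly supported $\varphi$ derived from the size condition \eqref{size-cond}. Your write-up is in fact more explicit than the paper's, which records the decay estimate and then passes directly to the density/closure argument without spelling out the verification of $(V^\ast)$ for $S\varphi$.
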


\begin{proof}
	For bounded compactly supported functions $f$, there holds
	\begin{equation}\label{ineq:aux2}
		|Sf(y)| \leq \frac{c}{|y|^n}
	\end{equation}
	for sufficiently large $|y|$, with $c>0$ not depending on $y$. In fact, suppose that $\text{supp}\, f \subset B(0,M)$. We have
	$$|y-z| \geq |y|-M \geq \frac{|y|}{2} \quad\quad \text{for} \;\; |z|\leq M \quad \text{and} \quad |y|\geq 2M.$$
	Hence \eqref{ineq:aux2} follows thanks to the size condition \eqref{size-cond}.
	
	Let now $f\in \Vziast$. Then there exists a sequence $(f_k) \subset C_0^\infty(\Rn)$ such that $f_k\to f$ in $\Lh$ as $k\to\infty$. By the continuity of $S$ in $\Lh$, we get
	$$ S f = S(\lim_{k\to\infty} f_k) = \lim_{k\to \infty} (S f_k).$$
	Thus we have $Sf\in \Vziast$ since this subspace is closed in $\Lh$.
\end{proof}

\section{Additional results}\label{sec:hybrids}

In this section we consider the following hybrids of Hardy and potential operators:
$$
K_\beta f(x):=\frac{1}{|x|^\beta} \int_{|y|<|x|} \frac{f(y)}{|x-y|^{n-\beta}}\,dy
$$
and
$$
\mathcal{K}_\beta f(x):=\int_{|y|>|x|} \frac{f(y)}{|y|^\beta |x-y|^{n-\beta}}\,dy
$$
(with $0<\beta\leq n$). As observed in \cite{PerNSamW16}, $K_\beta$ and $\mathcal{K}_\beta$ are integral operators bounded in $L^p(\Rn)$, $1<p<\infty$, with the corresponding kernels satisfying the size condition \eqref{size-cond}. Consequently, by Theorem~\ref{the:Vinfty-singular} we have the following result:
\begin{theorem}
	Let $1<p<\infty$, $0\leq\lambda<n$ and $0<\beta\leq n$. Then both operators $K_\beta$ and $\mathcal{K}_\beta$ are bounded on $\Vi$.
\end{theorem}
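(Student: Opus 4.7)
The plan is to deduce the statement directly from Theorem~\ref{the:Vinfty-singular} after verifying that $K_\beta$ and $\mathcal{K}_\beta$ fall under the definition of $p$-admissible sublinear singular type operators. The paper already records (citing \cite{PerNSamW16}) that both are bounded on $L^p(\Rn)$ for $1<p<\infty$ and that their kernels satisfy the pointwise size bound \eqref{size-cond}. Therefore the only remaining point is the ``size condition'' for the truncation $f\chi_{\Rn\setminus B(x,2r)}$ that appears in the definition of $p$-admissible operator.

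First I would isolate the elementary kernel estimates. On the support $\{|y|<|x|\}$ of the kernel of $K_\beta$, the triangle inequality gives $|x-y|\leq |x|+|y|\leq 2|x|$, so $|x|^{-\beta}\leq 2^\beta |x-y|^{-\beta}$, and hence
$$ \frac{\chi_{\{|y|<|x|\}}}{|x|^\beta\,|x-y|^{n-\beta}} \leq \frac{2^\beta}{|x-y|^n}.$$
The symmetric argument on $\{|y|>|x|\}$ yields
$$ \frac{\chi_{\{|y|>|x|\}}}{|y|^\beta\,|x-y|^{n-\beta}} \leq \frac{2^\beta}{|x-y|^n}$$
for $\mathcal{K}_\beta$. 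These are precisely the ingredients already asserted in \cite{PerNSamW16}, but writing them down explicitly is what we need for the next step.

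Next, using the integral representation together with these pointwise kernel estimates, for any $z\in B(x,r)$ we have
$$\chi_{B(x,r)}(z)\,\bigl|K_\beta\bigl(f\chi_{\Rn\setminus B(x,2r)}\bigr)(z)\bigr| \leq 2^\beta\,\chi_{B(x,r)}(z)\int_{\Rn\setminus B(x,2r)} \frac{|f(y)|}{|z-y|^n}\,dy,$$
and the same bound holds with $\mathcal{K}_\beta$ in place of $K_\beta$. This is exactly the ``size condition'' from the definition in Section~\ref{sec:prelim}. Since $K_\beta$ and $\mathcal{K}_\beta$ are (sub)linear and bounded in $L^p(\Rn)$, they are $p$-admissible sublinear singular type operators, and Theorem~\ref{the:Vinfty-singular} yields their boundedness in $\Vi$.

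The main obstacle, which is quite mild, lies only in the kernel estimates of the first paragraph; once those are in place, the conclusion is a direct invocation of Theorem~\ref{the:Vinfty-singular} and requires no further computation.
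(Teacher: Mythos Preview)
Your proposal is correct and follows essentially the same route as the paper: the paper simply records (citing \cite{PerNSamW16}) that $K_\beta$ and $\mathcal{K}_\beta$ are bounded on $L^p(\Rn)$ with kernels obeying the size bound \eqref{size-cond}, and then invokes Theorem~\ref{the:Vinfty-singular}. You have merely made explicit the elementary kernel inequalities and the resulting size condition that the paper leaves implicit.
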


{\bf Acknowledgments.} A. Almeida was supported by CIDMA (Center for Research and Development in Mathematics and Applications) and FCT (Portuguese Foundation for Science and Technology) within project UID/MAT/04106/2019. The research of S. Samko was supported by Russian Foundation for Basic Research under the grants 19-01-00223 and 18-01-00094-a.

\bibliographystyle{amsplain}

\end{document}